\newtheorem{theorem}{Theorem}
\newtheorem{lemma}[theorem]{Lemma}
\newtheorem{proposition}[theorem]{Proposition}
\theoremstyle{plain}
\theoremstyle{definition}
\newtheorem{definition}[theorem]{Definition}
\newtheorem{remark}[theorem]{Remark}
\newtheorem{example}[theorem]{Example}
\renewcommand{\labelenumi}{\textup{(\theenumi)}}
\newcommand{\Homeo}{\operatorname{Homeo}}
\newcommand{\id}{\operatorname{id}}
\newcommand{\Ker}{\operatorname{Ker}}
\newcommand{\Ad}{\operatorname{Ad}}
\def\det{{{\operatorname{det}}}}
\newcommand{\N}{\mathbb{N}}
\newcommand{\Zp}{{\mathbb{Z}}_+}
\newcommand{\C}{\mathcal{C}}
\title{Coded equivalence of one-sided topological Markov shifts}
\author{Kengo Matsumoto \\
Department of Mathematics \\
Joetsu University of Education \\
Joetsu, 943-8512, Japan
}
\begin{document}
\maketitle

\date{}

\def\det{{{\operatorname{det}}}}

\begin{abstract}
We introduce a notion of coded equivalence in one-sided topological Markov shifts.
The notion is inspired by coding theory.
One-sided topological conjugacy implies coded equivalence.
We will show that coded equivalence implies  continuous orbit equivalence of
one-sided topological Markov shifts.
\end{abstract}

2020{\it Mathematics Subject Classification}:
 Primary 94A45, 68Q45; Secondary 37B10,  68R15, 37A55.

{\it Keywords and phrases}: code, prefix code, coded equivalence,
topological Markov shift, orbit equivalence.


\def\OA{{{\mathcal{O}}_A}}
\def\OB{{{\mathcal{O}}_B}}
\def\FA{{{\mathcal{F}}_A}}
\def\FB{{{\mathcal{F}}_B}}
\def\DA{{{\mathcal{D}}_A}}
\def\DB{{{\mathcal{D}}_B}}
\def\Ext{{{\operatorname{Ext}}}}
\def\Max{{{\operatorname{Max}}}}
\def\Per{{{\operatorname{Per}}}}
\def\PerB{{{\operatorname{PerB}}}}
\def\Homeo{{{\operatorname{Homeo}}}}
\def\HA{{{\frak H}_A}}
\def\HB{{{\frak H}_B}}
\def\HSA{{H_{\sigma_A}(X_A)}}
\def\Out{{{\operatorname{Out}}}}
\def\Aut{{{\operatorname{Aut}}}}
\def\Ad{{{\operatorname{Ad}}}}
\def\Inn{{{\operatorname{Inn}}}}
\def\det{{{\operatorname{det}}}}
\def\exp{{{\operatorname{exp}}}}
\def\cobdy{{{\operatorname{cobdy}}}}
\def\Ker{{{\operatorname{Ker}}}}
\def\ind{{{\operatorname{ind}}}}
\def\id{{{\operatorname{id}}}}
\def\supp{{{\operatorname{supp}}}}
\def\cod{{{\operatorname{code}}}}
\def\Sco{{{\operatorname{Sco}}}}
\def\Act{{{\operatorname{Act}_{\DA}(\mathbb{R},\OA)}}}
\def\RepOA{{{\operatorname{Rep}(\mathbb{R},\OA)}}}
\def\RepDA{{{\operatorname{Rep}(\mathbb{R},\DA)}}}
\def\U{{{\operatorname{U}}}}

\bigskip

In \cite{MaPacific}, 
the author introduced a notion of continuous orbit equivalence of one-sided topological Markov shifts.
The definition of the equivalence relation was primary inspired by orbit equivalence theory 
of minimal homeomorphisms on Cantor sets 
established by Giordano--Putnam--Skau \cite{GPS}(cf. \cite{HPS}, etc.).
 Through the studies of classifications of continuous orbit equivalence of one-sided topological Markov shifts,
several interesting relationships with other areas of mathematics, 
$C^*$-algebras, groupoids, infinite discrete groups, etc.
  have been clarified (cf. \cite{MMKyoto}, \cite{MMGGD}, etc. ).
As a result, H. Matui and the author have succeeded to classify 
irreducible  one-sided topological Markov shifts under continuous orbit equivalence (\cite{MMKyoto}). 
However, there is no known any systematic method 
to give rise to continuous orbit equivalence of one-sided topological Markov shifts. 
 In this paper, we introduce a notion of {\it coded equivalence}\/ in one-sided topological Markov shifts.
 The definition of the coded equivalence is inspired by coding theory of formal language theory. 
Although relationship between symbolic dynamics and coding theory has been studied by many authors, for example
\cite{Beal}, \cite{BH},  \cite{BH2}, \cite{FieD1}, \cite{FieD2}, \cite{FieF}, etc,
the coded equivalence treated in this paper has not seen in any other papers than this.

We will study the coded equivalence from the view point of symbolic dynamical systems. 
It is well-known that topological conjugacy of symbolic dynamical systems is given by 
a sliding block code (cf. \cite{Kitchens}, \cite{LM}).
We will also introduce a notion of {\it moving block code}\/, that is a generalization of sliding block code. 
We will then see that one-sided topological conjugacy implies coded equivalence.
As a main result of the paper, we will show that the coded equivalence implies  continuous orbit equivalence of
one-sided topological Markov shifts (Theorem \ref{thm:main}).
We therefore know a close relationship between coding theory and continuous orbit equivalence theory of one-sided topological Markov shifts. 
Several examples of coded equivalent topological Markov shifts will be presented.

We will first provide several terminology and notation.
Let us denote by $\N$ and $\Zp$ the set of positive integers 
and the set of nonnegative integers, respectively.
Let $A =[A(i,j)]_{i,j=1}^N$ be an irreducible non permutation matrix over $\{0,1\}$.
Let us denote by $\Sigma_A$ the set $\{1,2,\dots, N\}$.
Let $X_A$ be the set of right one-sided infinite sequences $(x_n)_{n \in \N}$ of $\Sigma_A$   
such that $A(x_n, x_{n+1}) =1$ for all $n \in \N$.
The set $X_A$ is endowed with its product topology so that it is a homeomorphic to a Cantor discontinuum. 
It has a natural shift operation $\sigma_A$ defined by 
$\sigma_A((x_n)_{n\in \N}) = (x_{n+1})_{n\in \N}$.
The topological dynamical system $(X_A, \sigma_A)$ 
is called the one-sided topological Markov shift defined by the matrix $A$. 
The space $X_A$ is called the shift space of $(X_A,\sigma_A)$.
Let us denote by $B_k(X_A)$ the set of admissible words of $X_A$ with length $k$. 
We put $B_*(X_A) = \cup_{k=0}^\infty B_k(X_A)$, where $B_0(X_A)$ denotes the empty word.
For a word $\mu =(\mu_1, \dots,\mu_m) \in B_m(X_A)$,
let us denote by $U_\mu$ the cylinder set
$$
U_\mu =\{ (x_n)_{n\in \N} \in X_A\mid x_1 = \mu_1, \dots, x_m =\mu_m\}.
$$

\medskip

A {\it code}\/ $\C$ of $X_A$ is a nonempty subset $\C \subset B_*(X_A)$
such that for any equality
\begin{equation*}
\omega(i_1) \omega(i_2) \cdots \omega(i_k) 
= 
\omega(j_1) \omega(j_2) \cdots \omega(j_n) 
 \end{equation*} 
of words with
$\omega(i_1), \omega(i_2), \cdots, \omega(i_k), 
\omega(j_1), \omega(j_2), \cdots, \omega(j_n) 
\in \C$,
one has
\begin{equation*}
n =k \qquad
\text{ and }
\qquad
\omega(i_m) = \omega(j_m), \quad
m=1,2, \dots,n.
\end{equation*} 
A {\it prefix code }
is a code such that 
no word in it can be the beginning of another
( cf. \cite{BP}, \cite{BH}).
For example, let $A = 
\begin{bmatrix}
1 & 1 \\
1 & 1
\end{bmatrix}
$
and
$\Sigma_A =\{1,2\}$.
Consider
$\C =\{ 1, 21, 22\}$ and $\C' =\{ 1,12,22\}$.
We see that $\C$ is a prefix code,
whereas $\C'$ is a code but not a prefix code. 

The following lemma is easy to prove.
\begin{lemma}
Let 
$\C = \{ \omega(1),\omega(2), \dots,\omega(M)\} \subset B_*(X_A)$
be a finite set of admissible words of $X_A$.
Then 
$\C$ is a prefix code if and only if
$U_{\omega(i)} \cap U_{\omega(j)} =\emptyset$ for $i\ne j$.
\end{lemma}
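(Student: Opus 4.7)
The plan is to unwind the definitions on both sides of the equivalence, using only the definitions of cylinder set, admissibility, and prefix code.

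For the forward implication, I would argue by contradiction: assume $\C$ is a prefix code and that some $x = (x_n)_{n \in \N}$ lies in $U_{\omega(i)} \cap U_{\omega(j)}$ for $i \ne j$. Setting $m = |\omega(i)|$ and $n = |\omega(j)|$, the initial block $x_1 x_2 \dots x_{\max(m,n)}$ of $x$ begins with both $\omega(i)$ and $\omega(j)$, so one of these two words is a prefix of the other. The prefix-code hypothesis then forces $\omega(i) = \omega(j)$, contradicting $i \ne j$.

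For the reverse implication, I would establish both conditions in the definition of prefix code. For the no-prefix property: if $\omega(i)$ were a prefix of $\omega(j)$ for some $i \ne j$, then every sequence in $U_{\omega(j)}$ would also lie in $U_{\omega(i)}$, so pairwise disjointness would force $U_{\omega(j)} = \emptyset$; however, admissibility of $\omega(j)$ means it appears in some $y \in X_A$, and a suitable shift of $y$ belongs to $U_{\omega(j)}$, giving a contradiction. For the code property in the sense of the paper: given an equality $\omega(i_1) \omega(i_2) \cdots \omega(i_k) = \omega(j_1) \omega(j_2) \cdots \omega(j_n)$, a comparison of the initial letters shows that one of $\omega(i_1), \omega(j_1)$ is a prefix of the other, so they agree by the no-prefix property. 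Cancelling and iterating yields $k = n$ and $\omega(i_\ell) = \omega(j_\ell)$ for each $\ell$.

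The argument is essentially definitional, so no real obstacle arises; the only mildly subtle point is the nonemptiness of $U_{\omega(j)}$ for an admissible word, which follows immediately from the definition of admissibility rather than from any deeper feature of the Markov shift structure.
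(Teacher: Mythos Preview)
Your argument is correct and is exactly the routine definitional check one expects here. The paper does not actually give a proof of this lemma; it simply states that the lemma is easy to prove, so there is nothing further to compare.
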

For a prefix code
$\C = \{ \omega(1), \omega(2),\dots,\omega(M)\} \subset B_*(X_A)$,
we denote by
$\Sigma_{A(\C)}$ the set $\{1, 2, \dots, M\}$.
Let us denote by $\ell(i)$ the length of the word 
$\omega(i), i\in\Sigma_{A(\C)}.$
The word $\omega(i)$ is written
$$
\omega(i) =\omega_1(i)\omega_2(i)\cdots\omega_{\ell(i)}(i) \in B_{\ell(i)}(X_A)
\quad \text{ for some } \quad
\omega_j(i) \in \Sigma_A.
$$
For the word $\omega(i)$, define $\sigma_A(\omega(i)) \in  B_{\ell(i)-1}(X_A)$
by setting
$$
\sigma_A(\omega(i) ) 
 = \omega_2(i) \omega_3(i) \cdots \omega_{\ell(i)}(i). 
$$
We will introduce a notion of right Markov code in the following way.
\begin{definition}\label{def:Markovcode}
Let $\C=\{ \omega(1),\omega(2),\dots, \omega(M) \} \subset B_*(X_A)$
be a prefix code.
The prefix code $\C$ is called  {\it a right Markov code}\/ for $(X_A,\sigma_A)$
if it satisfies the following three conditions:
\begin{enumerate}
\renewcommand{\theenumi}{\roman{enumi}}
\renewcommand{\labelenumi}{\textup{(\theenumi)}}
\item (unique factorization)
For any $\gamma \in B_*(X_A)$,
there exists a word $\eta \in B_*(X_A)$ such that 
$\gamma \eta \in B_*(X_A)$, and 
there exists a unique finite sequence 
$(i_1, i_2,\dots,i_k) \in {(\Sigma_{A(\C)})}^k$ such that 
\begin{equation*}
\gamma \eta = \omega(i_1) \omega(i_2) \cdots \omega(i_k).
\end{equation*} 
\item (shift invariance)
There exists $L \in \N$ such that for any 
$i_1, i_2,\dots, i_L \in \Sigma_{A(\C)}$
with
$ \omega(i_1) \omega(i_2) \cdots \omega(i_L)\in B_*(X_A)$,
there exists 
$j_1, j_2,\dots, j_k \in \Sigma_{A(\C)}$
 such that 
\begin{equation}\label{eq:shiftinvariance}
\sigma_A(\omega(i_1) )\omega(i_2) \cdots \omega(i_L)
= \omega(j_1) \omega(j_2) \cdots \omega(j_k),
\end{equation} 
 where
$k$ depends on the finite sequence $(i_1, i_2,\dots, i_L)$.
\item (irreducibility)
For any ordered pair $\omega(i), \omega(j) \in \C$,
there exist $n_1, n_2 \dots, n_l \in \Sigma_{A(\C)}$ such that 
\begin{equation*}
\omega(i)\omega(n_1)\omega(n_2) \cdots \omega(n_l) \omega(j) \in B_*(X_A) 
\end{equation*}
\end{enumerate}
\end{definition}
We call (i), (ii) and (iii) unique factorization property, shift equivalence property
and irreducible condition, respectively.
\begin{remark}
In the definition of right Markov code,
we do not necessarily assume that 
$\omega(i) \in \C$ implies $\sigma(\omega(i)) \in \C$.
Hence in the above definition (ii),
the word $\sigma_A(\omega(i))$ does not necessarily belong to the set $\C$ 
when $\omega(i) \in \C$.
\end{remark}

\begin{lemma}\label{lem:factorization}
Let
$\C=\{ \omega(1),\omega(2),\dots, \omega(M) \} \subset B_*(X_A)$
be a prefix code.
The following assertions are 
equivalent.
\begin{enumerate}
\renewcommand{\theenumi}{\roman{enumi}}
\renewcommand{\labelenumi}{\textup{(\theenumi)}}
\item  $\C$ satisfies the unique factorization property (i) of Definition \ref{def:Markovcode}. 
\item For any $x =(x_n)_{n\in \N} \in X_A$
there exists a unique increasing sequence
$1<k_1 <k_2  <\cdots  $ of positive integers
and
$i_1, i_2, \cdots \in \Sigma_{A(\C)}$ such that 
\begin{equation*}
x_{[1,k_1)} =\omega(i_1), \qquad
x_{[k_1,k_2)} =\omega(i_2),\quad \dots, \quad
x_{[k_n,k_{n+1})}=\omega(i_{n+1}), \quad \dots
\end{equation*}
That is,
$x$ has a unique factorization:
$$
x = \omega(i_1)\omega(i_2)\cdots \omega(i_n)\cdots
$$
\end{enumerate}
\end{lemma}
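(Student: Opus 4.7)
The plan is to prove the two implications in turn, using the prefix-code/code properties to get uniqueness essentially for free, so that the real content lies in existence.

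\textbf{(i) $\Rightarrow$ (ii).} Let $x = (x_n)_{n\in\N} \in X_A$. For each $n \in \N$, apply (i) to $\gamma = x_{[1,n]} \in B_n(X_A)$ to obtain $\eta_n \in B_*(X_A)$ and a factorization $x_{[1,n]}\,\eta_n = \omega(i_1^{(n)}) \cdots \omega(i_{k_n}^{(n)})$. The first step is to show that for all sufficiently large $n$, $\ell(i_1^{(n)}) \le n$, so that $\omega(i_1^{(n)})$ is actually a prefix of $x$. Since $\C$ is a finite set, $L := \max_i \ell(i)$ is finite, and for $n \ge L$ one has $\ell(i_1^{(n)}) \le L \le n$. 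The prefix-code property then forces $\omega(i_1^{(n)})$ to be one and the same codeword $\omega(i_1)$ for all such $n$ (two distinct codewords cannot both be prefixes of $x$). Thus $x = \omega(i_1) \, y$ with $y = \sigma_A^{\ell(i_1)}(x) \in X_A$, and iterating this argument on $y$ produces the required factorization $x = \omega(i_1)\omega(i_2)\cdots$. Uniqueness of the $i_n$ and of the breakpoints $k_n$ follows immediately from the prefix property applied at each stage.

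\textbf{(ii) $\Rightarrow$ (i).} Let $\gamma \in B_*(X_A)$. Since $A$ is irreducible, $\gamma$ extends to some $x \in X_A$ with $x_{[1,|\gamma|]} = \gamma$. Apply (ii) to obtain the unique factorization $x = \omega(i_1)\omega(i_2)\cdots$. Let $k$ be the unique index with
\[
\ell(i_1) + \cdots + \ell(i_k) \le |\gamma| < \ell(i_1) + \cdots + \ell(i_{k+1}),
\]
and let $r = |\gamma| - (\ell(i_1)+\cdots+\ell(i_k))$. Set $\eta$ to be the suffix of $\omega(i_{k+1})$ consisting of its last $\ell(i_{k+1}) - r$ letters (or the empty word if $r = 0$). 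Then
\[
\gamma \eta = \omega(i_1)\omega(i_2)\cdots\omega(i_{k+1}) \in B_*(X_A),
\]
which provides the required factorization. For uniqueness, if $\gamma \eta = \omega(j_1)\cdots\omega(j_m)$ is any factorization, then since a prefix code is in particular a code (equal concatenations of codewords force equal sequences), we get $m = k+1$ and $j_s = i_s$ for all $s$.

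\textbf{Main obstacle.} The only genuinely non-formal point is the assertion in (i)$\Rightarrow$(ii) that the leading codeword $\omega(i_1^{(n)})$ stabilises and is a prefix of $x$; this rests on $\C$ being finite (whence $\ell(i_1^{(n)}) \le L$) together with the prefix property. Everything else — the iterative extension to an infinite factorization, the unique extension of $\gamma$ in the converse, and both uniqueness statements — is bookkeeping built directly on the definition of a prefix code.
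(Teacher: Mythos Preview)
Your argument is correct and follows essentially the same approach as the paper: both exploit the finiteness of $\C$ (so that the first codeword in any factorization of a long enough prefix of $x$ is entirely contained in $x$) together with the prefix property to pin down the leading codeword, and then proceed along $x$. The only cosmetic differences are that the paper stabilises all codewords simultaneously via a sequence $m_n > n K_0$ rather than iterating one codeword at a time, and it dismisses (ii)$\Rightarrow$(i) as ``obvious'' where you spell it out.
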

\begin{proof}
(i) $\Longrightarrow$ (ii):
Assume that 
$\C=\{ \omega(1),\omega(2),\dots, \omega(M) \} \subset B_*(X_A)$
satisfies the unique factorization property (i) of Definition \ref{def:Markovcode}.
Put
$K_0 = \max\{ \ell(i) \mid i \in \Sigma_{A(\C)} \},$
where $\ell(i)$ denotes the length of $\omega(i)$.
 Take an increasing sequence 
$1<m_1 <m_2  <\cdots  $ of positive integers
such that 
$n\cdot K_0 < m_n$ for $n \in \N$.
Take an arbitrary $x =(x_n)_{n\in \N} \in X_A$.
For the word
$x_{[1,m_n]} \in B_{m_n}(X_A)$ for each $n \in \N$,
by the unique factorization property (i) of Definition \ref{def:Markovcode},
there exists
$\eta(n) \in B_*(X_A)$ such that 
$x_{[1,m_n]} \eta(n) \in B_*(X_A)$, and 
there exists a unique finite sequence 
$(i_1(n), i_2(n),\dots,i_{k_n}(n)) \in (\Sigma_{A(\C)})^{k_n}$ 
such that 
\begin{equation*}
x_{[1,m_n]} \eta(n)  = \omega(i_1(n)) \omega(i_2(n)) \cdots \omega(i_{k_n}(n)).
\end{equation*} 
Since $\C$ is a prefix code,
we see that 
\begin{align*}
\omega(i_1(1)) &=\omega(i_1(n)) \quad \text{ for all } n \ge 1, \\
\omega(i_2(2)) &=\omega(i_2(n)) \quad \text{ for all } n \ge 2, \\
\cdots & = \cdots \\
\omega(i_\ell(\ell)) &=\omega(i_\ell(n)) \quad \text{ for all } n \ge \ell.
\end{align*}
Take $\xi(n) \in X_A$ such that 
$x_{[1,m_n]} \eta(n) \xi(n) \in X_A$.
Put
$x(n) =x_{[1,m_n]} \eta(n) \xi(n) \in X_A$ for $n \in \N$.
Then $x(n) $ converges to $x$
and
we have
$$
x =\omega(i_1(1))\omega(i_2(2))\cdots \omega(i_\ell(\ell)) \cdots
$$
The factorization is unique, because $\C$ is a prefix code.

(ii) $\Longrightarrow$ (i):
The assertion is obvious.
\end{proof}

For an $N\times N$ irreducible matrix $A=[A(i,j)]_{i,j=1}^N$
with entries in $\{0,1\}$,
an associated directed graph $G_A =(V_A, E_A)$
is defined in the following way.
The vertex set $V_A = \Sigma_A (=\{1,2,\dots, N\})$.
If $A(i,j) =1,$ then a directed edge from the vertex $i$ to the vertex $j$ is defined.
The edge set $E_A$ consists of such edges.
The transition matrix of the graph $G_A$ is the original matrix $A$.
Hence the shift space $X_A$ consists of right infinite sequences of concatenating vertices
in the graph $G_A$.  
\begin{example}

\noindent

{\bf 0.}
Let $A=[A(i,j)]_{i,j=1}^N$
be an $N\times N$ irreducible matrix with entries in $\{0,1\}$.
Define $\C_0 = \Sigma_A$
the set of admissible words of length $1$.
Then $\C_0$ is a right Markov code.
It is called the {\it trivial right Markov code}\/ for $(X_A,\sigma_A)$.

{\bf 1.}
Let
$A_1=
\begin{bmatrix}
1& 1 \\
1&0
\end{bmatrix}.
$
Define
$\C_1 = \{1, 21\}$.
Then $\C_1$ is a right Markov code.
The directed graph $G_{A_1}$ is Figure \ref{fig:fibonacci}.
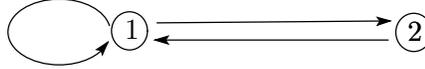
\begin{figure}[htbp]
\begin{center}
\unitlength 0.1in
\begin{picture}( 24.0400,  3.4400)( 17.6000, -9.8400)
%
\special{pn 8}%
\special{ar 2396 806 94 100  0.0000000 6.2831853}%
%
\special{pn 8}%
\special{ar 2028 812 268 172  0.3226621 6.0865560}%
%
\special{pn 8}%
\special{pa 2276 872}%
\special{pa 2282 866}%
\special{fp}%
\special{sh 1}%
\special{pa 2282 866}%
\special{pa 2224 904}%
\special{pa 2248 908}%
\special{pa 2254 930}%
\special{pa 2282 866}%
\special{fp}%
\put(23.7000,-8.5000){\makebox(0,0)[lb]{$1$}}%
%
\put(41.6400,-6.6400){\makebox(0,0)[lb]{}}%
%
\special{pn 8}%
\special{ar 3884 814 94 100  0.0000000 6.2831853}%
%
\put(32.4800,-9.6000){\makebox(0,0)[lb]{}}%
%
\special{pn 8}%
\special{pa 3750 854}%
\special{pa 2530 854}%
\special{fp}%
\special{sh 1}%
\special{pa 2530 854}%
\special{pa 2598 874}%
\special{pa 2584 854}%
\special{pa 2598 834}%
\special{pa 2530 854}%
\special{fp}%
%
\special{pn 8}%
\special{pa 2540 764}%
\special{pa 3760 754}%
\special{fp}%
\special{sh 1}%
\special{pa 3760 754}%
\special{pa 3694 736}%
\special{pa 3708 754}%
\special{pa 3694 776}%
\special{pa 3760 754}%
\special{fp}%
\put(38.5000,-8.6000){\makebox(0,0)[lb]{$2$}}%
\put(38.5000,-8.6000){\makebox(0,0)[lb]{$2$}}%
\end{picture}%
\end{center}
\caption{The directed graph $G_{A_1}$}
\label{fig:fibonacci}
\end{figure}

{\bf 2.}
Let
$A_2=
\begin{bmatrix}
0 & 1& 0 \\
0 & 0& 1 \\
1 & 1& 0
\end{bmatrix}.
$
Define
$\C_2 = \{12, 23, 323, 312\}$.
Then $\C_2$ is a right Markov code.

{\bf 3.}
Let
$A_3=
\begin{bmatrix}
1 & 1& 1 \\
1 & 0& 1 \\
1 & 0& 0
\end{bmatrix}.
$
Define
$\C_3 = \{1,21, 31, 231\}$.
Then $\C_3$ is a right Markov code.

The directed graphs $G_{A_2}$ and $G_{A_3}$ are Figure \ref{fig:ex2ex3}.
\begin{figure}[htbp]
\begin{center}
\unitlength 0.1in
\begin{picture}( 41.9800, 15.6000)( 14.8800,-22.9000)
%
\put(30.3600,-19.4200){\makebox(0,0)[lb]{}}%
%
\put(24.2000,-9.6000){\makebox(0,0)[lb]{}}%
\put(27.0000,-22.3000){\makebox(0,0)[lb]{$3$}}%
%
\special{pn 8}%
\special{pa 4870 1110}%
\special{pa 4840 1100}%
\special{pa 4814 1080}%
\special{pa 4794 1056}%
\special{pa 4776 1030}%
\special{pa 4764 1000}%
\special{pa 4756 970}%
\special{pa 4752 938}%
\special{pa 4752 906}%
\special{pa 4758 874}%
\special{pa 4768 844}%
\special{pa 4782 814}%
\special{pa 4798 788}%
\special{pa 4822 764}%
\special{pa 4848 746}%
\special{pa 4878 734}%
\special{pa 4908 730}%
\special{pa 4940 736}%
\special{pa 4970 746}%
\special{pa 4998 764}%
\special{pa 5020 788}%
\special{pa 5036 814}%
\special{pa 5048 844}%
\special{pa 5058 874}%
\special{pa 5062 906}%
\special{pa 5062 938}%
\special{pa 5058 970}%
\special{pa 5048 1000}%
\special{pa 5036 1030}%
\special{pa 5018 1056}%
\special{pa 4996 1080}%
\special{pa 4970 1098}%
\special{pa 4952 1108}%
\special{sp}%
%
\special{pn 8}%
\special{pa 4956 1106}%
\special{pa 4950 1108}%
\special{fp}%
\special{sh 1}%
\special{pa 4950 1108}%
\special{pa 5018 1092}%
\special{pa 4996 1082}%
\special{pa 4998 1058}%
\special{pa 4950 1108}%
\special{fp}%
%
\special{pn 8}%
\special{pa 1826 2198}%
\special{pa 2612 2188}%
\special{fp}%
\special{sh 1}%
\special{pa 2612 2188}%
\special{pa 2544 2168}%
\special{pa 2558 2188}%
\special{pa 2546 2208}%
\special{pa 2612 2188}%
\special{fp}%
%
\special{pn 8}%
\special{pa 2660 2050}%
\special{pa 2306 1350}%
\special{fp}%
\special{sh 1}%
\special{pa 2306 1350}%
\special{pa 2318 1418}%
\special{pa 2330 1398}%
\special{pa 2354 1400}%
\special{pa 2306 1350}%
\special{fp}%
%
\special{pn 8}%
\special{pa 2564 2110}%
\special{pa 1780 2126}%
\special{fp}%
\special{sh 1}%
\special{pa 1780 2126}%
\special{pa 1848 2144}%
\special{pa 1834 2124}%
\special{pa 1846 2104}%
\special{pa 1780 2126}%
\special{fp}%
\put(15.9000,-22.1000){\makebox(0,0)[lb]{$2$}}%
\put(21.9000,-12.8000){\makebox(0,0)[lb]{$1$}}%
%
\put(56.8600,-19.3200){\makebox(0,0)[lb]{}}%
%
\special{pn 8}%
\special{ar 5438 2180 132 110  0.0000000 6.2831853}%
%
\put(50.7000,-9.5000){\makebox(0,0)[lb]{}}%
\put(53.9000,-22.2000){\makebox(0,0)[lb]{$3$}}%
%
\special{pn 8}%
\special{pa 4390 2000}%
\special{pa 4860 1370}%
\special{fp}%
\special{sh 1}%
\special{pa 4860 1370}%
\special{pa 4804 1412}%
\special{pa 4828 1414}%
\special{pa 4836 1436}%
\special{pa 4860 1370}%
\special{fp}%
%
\special{pn 8}%
\special{pa 4770 1360}%
\special{pa 4306 1992}%
\special{fp}%
\special{sh 1}%
\special{pa 4306 1992}%
\special{pa 4362 1950}%
\special{pa 4338 1950}%
\special{pa 4328 1926}%
\special{pa 4306 1992}%
\special{fp}%
\put(42.6000,-22.1000){\makebox(0,0)[lb]{$2$}}%
\put(48.6000,-12.9000){\makebox(0,0)[lb]{$1$}}%
%
\special{pn 8}%
\special{pa 5410 2010}%
\special{pa 4982 1352}%
\special{fp}%
\special{sh 1}%
\special{pa 4982 1352}%
\special{pa 5002 1418}%
\special{pa 5012 1396}%
\special{pa 5036 1396}%
\special{pa 4982 1352}%
\special{fp}%
%
\special{pn 8}%
\special{pa 4920 1402}%
\special{pa 5352 2056}%
\special{fp}%
\special{sh 1}%
\special{pa 5352 2056}%
\special{pa 5332 1990}%
\special{pa 5324 2012}%
\special{pa 5300 2012}%
\special{pa 5352 2056}%
\special{fp}%
%
\special{pn 8}%
\special{ar 4310 2150 132 110  0.0000000 6.2831853}%
%
\special{pn 8}%
\special{ar 4910 1240 132 110  0.0000000 6.2831853}%
%
\special{pn 8}%
\special{ar 2240 1230 132 110  0.0000000 6.2831853}%
%
\special{pn 8}%
\special{ar 1620 2170 132 110  0.0000000 6.2831853}%
%
\special{pn 8}%
\special{ar 2760 2170 132 110  0.0000000 6.2831853}%
%
\special{pn 8}%
\special{pa 4470 2150}%
\special{pa 5290 2170}%
\special{fp}%
\special{sh 1}%
\special{pa 5290 2170}%
\special{pa 5224 2148}%
\special{pa 5238 2170}%
\special{pa 5224 2188}%
\special{pa 5290 2170}%
\special{fp}%
%
\special{pn 8}%
\special{pa 2170 1380}%
\special{pa 1724 2028}%
\special{fp}%
\special{sh 1}%
\special{pa 1724 2028}%
\special{pa 1778 1984}%
\special{pa 1754 1984}%
\special{pa 1746 1962}%
\special{pa 1724 2028}%
\special{fp}%
\end{picture}%
\end{center}
\caption{The directed graphs $G_{A_2}$ and $G_{A_3}$}
\label{fig:ex2ex3}
\end{figure}
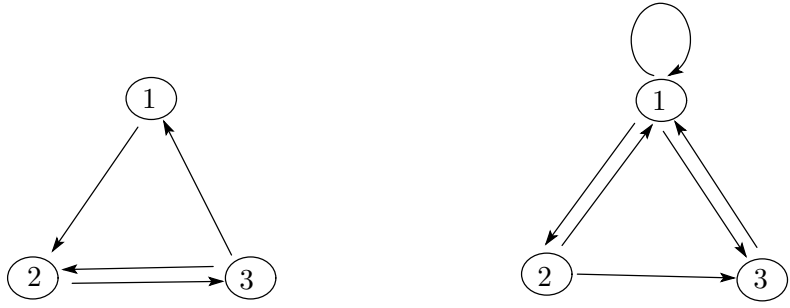
\end{example}

The following lemma shows that a one-sided full shift
has only trivial right Markov code. 
\begin{lemma}\label{lem:fixedpoint}
Let $\C \subset B_*(X_A)$ 
be a right Markov code for $(X_A, \sigma_A)$.
Suppose that $X_A$ has a  fixed point $a^\infty =(a,a,a,\dots ) \in X_A$.
We then have 
$a^m =\overbrace{a\cdots a}^m \in \C$ if and only if $m =1$.
\end{lemma}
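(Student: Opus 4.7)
The plan is to extract the unique power of $a$ that lies in $\C$ and then use the shift invariance property to force its exponent down to $1$.

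First, I would apply Lemma \ref{lem:factorization} to the fixed point $a^\infty \in X_A$: the unique factorization $a^\infty = \omega(i_1)\omega(i_2)\cdots$ expresses $a^\infty$ as a concatenation of code words, and since every subword of $a^\infty$ has the form $a^n$, each $\omega(i_k) = a^{m_k}$ for some $m_k \in \N$. In particular, at least one power of $a$ lies in $\C$. The prefix condition, via Lemma 1, pins this exponent down: if $a^m$ and $a^{m'}$ both belonged to $\C$ with $m < m'$, then $a^m$ would be a proper prefix of $a^{m'}$ and hence $U_{a^{m'}} \subset U_{a^m}$, contradicting $U_{a^m} \cap U_{a^{m'}} = \emptyset$. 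So there is a unique $m_0 \in \N$ with $a^{m_0} \in \C$, and the unique factorization of $a^\infty$ reads $a^{m_0}\, a^{m_0}\, a^{m_0}\cdots$.

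Second, I would invoke the shift invariance property (ii) with $L$ copies of $\omega(i_1) = \cdots = \omega(i_L) = a^{m_0}$. The concatenation $\omega(i_1)\cdots\omega(i_L) = a^{L m_0}$ is admissible since it is a prefix of $a^\infty$, so (ii) supplies indices $j_1,\dots,j_k \in \Sigma_{A(\C)}$ with
\[
\sigma_A(a^{m_0})\, (a^{m_0})^{L-1} \;=\; a^{L m_0 - 1} \;=\; \omega(j_1)\omega(j_2)\cdots \omega(j_k).
\]
Each $\omega(j_r)$ is an admissible subword of $a^\infty$, hence a power of $a$, hence equals $a^{m_0}$ by uniqueness. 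Consequently $k m_0 = L m_0 - 1$, giving $m_0(L-k) = 1$ and forcing $m_0 = 1$.

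This yields both halves of the ``iff'': the $(\Leftarrow)$ direction is the assertion $a \in \C$, obtained as a by-product of the first step together with $m_0=1$, while the $(\Rightarrow)$ direction is that the unique admissible exponent equals $1$. I expect no serious obstacle; the only mild care is in (a) using the prefix property to extract a unique exponent, and (b) recognising that every code word appearing in the shift invariance equation is forced to be the same $a^{m_0}$. The irreducibility condition (iii) plays no role.
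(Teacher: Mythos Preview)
Your proposal is correct and follows essentially the same route as the paper: factorize $a^\infty$ via Lemma~\ref{lem:factorization}, use the prefix property to pin down a unique exponent $m_0$, then apply shift invariance to the block $(a^{m_0})^L$ and compare lengths to force $m_0=1$. The only cosmetic difference is that the paper phrases the final step as a contradiction from the assumption $m_0>1$, whereas you deduce $m_0(L-k)=1$ directly.
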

\begin{proof}
Let $\C=\{ \omega(1),\omega(2),\dots, \omega(M) \} \subset B_*(X_A)$
be a right Markov code for $(X_A,\sigma_A)$.
By Lemma \ref{lem:factorization},
there uniquely exists
$i_1, i_2, \dots \in \Sigma_{A(\C)}$
such that the fixed point
$a^\infty$ is uniquely factorized as
\begin{equation*}
a^\infty = \omega(i_1)\omega(i_2)\cdots \omega(i_n)\cdots
\end{equation*}
Suppose that 
$$
\omega(i_1) = \overbrace{a\cdots a}^{m_1},
\qquad
\omega(i_2) = \overbrace{a\cdots a}^{m_2},\qquad
\dots
$$
Since the code $\C$ is a prefix code,
we know that $m_1 = m_2 = \cdots $ putting $m_0$.
Suppose that $m_0 >1$.
By the shift invariance property of right Markov code,
there exists $L \in \N$ such that 
 there exist 
$j_1, j_2,\dots, j_k \in \Sigma_{A(\C)}$
 satisfying
\begin{equation} \label{eq:am0}
\sigma_A(a^{m_0} ) \overbrace{a^{m_0} \cdots a^{m_0}}^{L-1}
= \omega(j_1) \omega(j_2) \cdots \omega(j_k).
\end{equation} 
The words $\omega(j_1),  \omega(j_2),  \dots, \omega(j_k)
$ appeared in the right hand side of \eqref{eq:am0}
must be $a^{m_0}$,
because $\C$ is a prefix code.
Hence we see that 
\begin{equation*} 
a^{m_0-1}  \overbrace{a^{m_0} \cdots a^{m_0}}^{L-1}
= \overbrace{a^{m_0}a^{m_0} \cdots a^{m_0}}^k
\end{equation*} 
so that
$m_0 -1 + (L-1)m_0 = k\cdot m_0$,
proving 
$m_0 =1$.
This implies that 
$a^m =\overbrace{a\cdots a}^m \in \C$ if and only if $m =1$.
\end{proof}
Let us denote by $(X_{[N]}, \sigma_{[N]})$ the full $N$-shift over 
$\Sigma_{[N]} =\{1,2,\dots, N \}$
\begin{proposition}
A one-sided full shift
has only trivial right Markov code.
\end{proposition}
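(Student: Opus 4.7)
The plan is to show that any right Markov code $\C = \{\omega(1),\ldots,\omega(M)\}$ for the full $N$-shift $(X_{[N]}, \sigma_{[N]})$ coincides with $\C_0 = \Sigma_{[N]}$. The strategy is to combine Lemma \ref{lem:fixedpoint} applied to every symbol with the prefix-code condition.

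First I would use that, because the transition matrix is all-ones, every letter $a \in \Sigma_{[N]}$ gives rise to a fixed point $a^\infty =(a,a,\dots) \in X_{[N]}$. By Lemma \ref{lem:factorization} this fixed point has a unique factorization
\[
a^\infty = \omega(i_1) \omega(i_2) \cdots \omega(i_n) \cdots .
\]
Since only the symbol $a$ appears in $a^\infty$, each $\omega(i_k)$ must be of the form $a^{m_k}$ for some $m_k \geq 1$. The prefix-code property forces all the $m_k$ to equal a common value $m_0$, for otherwise the shorter of two unequal powers of $a$ would be a prefix of the longer. Applying Lemma \ref{lem:fixedpoint} to this $m_0$ then yields $m_0 = 1$, and therefore $a = \omega(i_1) \in \C$. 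Performing this argument for every $a \in \Sigma_{[N]}$ gives the inclusion $\Sigma_{[N]} \subset \C$.

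Finally I would rule out longer words: if some $\omega \in \C$ had length $\geq 2$, its first letter $a \in \Sigma_{[N]} \subset \C$ would be a strict prefix of $\omega$, violating the prefix property. Hence $\C = \Sigma_{[N]} = \C_0$, the trivial right Markov code.

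There is no genuine obstacle here; the proposition is essentially a packaging of Lemma \ref{lem:fixedpoint} with the prefix condition. The only subtlety worth noting is that one needs Lemma \ref{lem:factorization} (factorization of infinite sequences) rather than the finite version in Definition \ref{def:Markovcode}(i), in order to cleanly extract that each factor of $a^\infty$ is a pure power of $a$ of a common length.
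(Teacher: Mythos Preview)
Your proposal is correct and follows essentially the same approach as the paper: invoke Lemma~\ref{lem:fixedpoint} for each symbol to obtain $\Sigma_{[N]}\subset\C$, then use the prefix property to exclude any longer word. Note that your intermediate steps (factoring $a^\infty$ via Lemma~\ref{lem:factorization} and arguing the factors are equal powers of $a$) are already carried out inside the proof of Lemma~\ref{lem:fixedpoint}, so the paper simply cites that lemma directly to get $a\in\C$ without repeating them.
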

\begin{proof}
Let $\C$ be a right Markov code for $(X_{[N]}, \sigma_{[N]})$.
By Lemma \ref{lem:fixedpoint}, we see that 
$\Sigma_{[N]} \subset \C$.
As $\C$ is a prefix code, there are no other words in $\C$ than 
$\Sigma_{[N]}.$
\end{proof}

Let 
$\C=\{ \omega(1),\omega(2),\dots, \omega(M) \} \subset B_*(X_A)$
be a right Markov code for $(X_A,\sigma_A)$.
We write
$\omega(i) = (\omega_1(i),\omega_2(i), \dots,\omega_{\ell(i)}(i)) \in B_{\ell(i)}(X_A)$
with $\omega_j(i) \in \Sigma_A$.
We put
$$
s(\omega(i)) = \omega_{1}(i) \in \Sigma_A, \qquad
r(\omega(i)) = \omega_{\ell(i)}(i) \in \Sigma_A.
$$
Define an $M\times M$ matrix $A(\C)=[A(\C)(i,j)]_{i,j=1}^M$
with entries in $\{0,1\}$ by setting
\begin{equation*}
A(\C)(i,j) = A(r(\omega(i)), s(\omega(j))), \qquad i,j \in \Sigma_{A(\C)}.
\end{equation*}
This means that 
\begin{equation*}
A(\C)(i,j) = 
\begin{cases}
1 & \text{ if } \omega(i) \omega(j) \in B_*(X_A), \\
0 & \text{ otherwise. }
\end{cases}
\end{equation*}

Hence we have a one-sided topological Markov shift
$(X_{A(\C)}, \sigma_{A(\C)})$ 
from a right Markov code $\C$ for $(X_A, \sigma_A)$.
\begin{lemma}
Suppose that $A$ is irreducible and not any permutation.
Then the matrix $A(\C)$ is irreducible and not any permutation.
\end{lemma}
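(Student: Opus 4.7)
The plan is to address the two assertions separately, with irreducibility being essentially immediate and non-permutation requiring a short indirect argument via unique factorization.

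First, I would dispatch irreducibility directly from condition (iii) of Definition \ref{def:Markovcode}. Given $i,j \in \Sigma_{A(\C)}$, the irreducibility of $\C$ yields $n_1,\dots,n_l \in \Sigma_{A(\C)}$ with $\omega(i)\omega(n_1)\cdots\omega(n_l)\omega(j) \in B_*(X_A)$. Reading off consecutive concatenations, each adjacent pair is admissible, so $A(\C)(i,n_1) = A(\C)(n_1,n_2) = \cdots = A(\C)(n_l,j) = 1$. This gives a directed path from $i$ to $j$ in the graph of $A(\C)$, proving irreducibility.

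For the non-permutation claim I would argue by contradiction. Suppose $A(\C)$ is a permutation matrix. Combined with the irreducibility just proved, $A(\C)$ must be a single cyclic permutation on $\Sigma_{A(\C)} = \{1,\dots,M\}$, and in particular the shift space $X_{A(\C)}$ consists of only $M$ points (all of period $M$). Now I would invoke Lemma \ref{lem:factorization}: every $x \in X_A$ has a unique factorization
\[
x = \omega(i_1)\omega(i_2)\omega(i_3)\cdots
\]
with $(i_n)_{n\in\N} \in (\Sigma_{A(\C)})^\N$. Since adjacent pairs $\omega(i_n)\omega(i_{n+1})$ are subwords of a point in $X_A$, they are admissible, so $A(\C)(i_n,i_{n+1}) = 1$ and $(i_n)_{n\in\N} \in X_{A(\C)}$. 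The assignment $\varphi(x) = (i_n)_{n\in\N}$ defines a map $\varphi \colon X_A \to X_{A(\C)}$, and it is injective because $\varphi(x)$ together with $\C$ reconstructs $x$ by concatenation. Hence $|X_A| \le |X_{A(\C)}| = M < \infty$.

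To close the contradiction I would observe that an irreducible non-permutation $0$-$1$ matrix $A$ must have some row with two or more $1$'s (otherwise every row has exactly one $1$, and strong connectedness forces every column to also have exactly one $1$, making $A$ a permutation). Consequently some vertex of $G_A$ has out-degree at least $2$, combined with irreducibility this yields $X_A$ infinite, contradicting $|X_A| \le M$. Therefore $A(\C)$ cannot be a permutation matrix.

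The only delicate step is the reduction from ``$A(\C)$ a permutation matrix'' to ``$X_{A(\C)}$ finite'', but this is a standard consequence of the fact that an irreducible permutation matrix is a cyclic permutation; the rest is bookkeeping. I expect no serious obstacle beyond being careful that $\varphi$ lands in $X_{A(\C)}$ and is injective, both of which follow transparently from unique factorization.
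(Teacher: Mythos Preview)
Your proof is correct, and the irreducibility half is identical to the paper's. For the non-permutation half you take a genuinely different route: the paper argues \emph{directly}, starting from a branching point in $G_A$ (a vertex with two distinct continuations), extending the two resulting words to points $x \ne x'$ of $X_A$, and then applying unique factorization to exhibit some $i_{n-1} \in \Sigma_{A(\C)}$ with two distinct successors $i_n \ne i'_n$; irreducibility of $A(\C)$ then propagates this branching to every vertex. Your argument is instead a cardinality contradiction: if $A(\C)$ were an irreducible permutation it would be a single cycle, so $X_{A(\C)}$ would be finite, yet the factorization map $\varphi$ (which is the standard coding homeomorphism $h_\C$ used later in the paper) injects the infinite set $X_A$ into it. Your approach is slightly slicker and avoids tracking where the factorizations of $x$ and $x'$ first diverge; the paper's approach is more constructive in that it pinpoints an explicit branching vertex in $G_{A(\C)}$.
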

\begin{proof}
By the irreducible condition (iii) of Definition \ref{def:Markovcode},
it is direct to see that the matrix $A(\C)$ is irreducible.
It suffices to show that $A(\C)$ is not any permutation.
As $A$ is irreducible and not any permutation,
for any $x_1 \in \Sigma_A$, 
there exists a finite sequence
$x_2,  \dots, x_k, x_{k+1},x'_{k+1}\in \Sigma_A$
with $x_{k+1} \ne x'_{k+1}$ such that 
$(x_1,x_2, \dots, x_k, x_{k+1}) \in B_{k+1}(X_A)$
and
$(x_1,x_2,  \dots, x_k, x'_{k+1}) \in B_{k+1}(X_A)$.
Extend the admissible words
$(x_1,x_2,  \dots, x_k, x_{k+1})$
and  
$(x_1,x_2,  \dots, x_k, x'_{k+1})$
to its right infinitely as elements of $X_A$.
We denote them by
$x, x' \in X_A$, respectively,
so that 
$x \ne x'$. 
By Lemma \ref{lem:factorization},
there exist $n\in \N$ and 
$$
\omega(i_1)\omega(i_2)\cdots\omega(i_{n-1})\omega(i_n),\qquad
\omega(i_1)\omega(i_2)\cdots\omega(i_{n-1})\omega(i'_n) \in B_*(X_A)
$$
for $x,x'$ so that 
$(i_1, i_2,\dots, i_{n-1}, i_n),
(i_1, i_2,\dots, i_{n-1}, i'_n) \in B_{n}(A(\C)),
$
such that 
$i_n \ne i'_n$.
As $A(\C)$ is irreducible,
any $i_k \in \Sigma_{A(\C)}$
goes to $i_1$,
so that $i_k$ has distinct followers in $X_{A(\C)}$.
This means that 
the matrix $A(\C)$ is not any permutation.
\end{proof}
Two one-sided topological Markov shifts 
$(X_A, \sigma_A)$ and
$(X_B, \sigma_B)$ 
are said to be topologically conjugate if there exists a homeomorphism
$h: X_A\longrightarrow X_B$ such that 
$h\circ\sigma_A = \sigma_B\circ h$.
The homeomorphism $h:X_A\longrightarrow X_B$ is called a topological conjugacy.
Now we will introduce a notion of coded equivalence 
in one-sided topological Markov shifts. 
\begin{definition}
Let $A, B$ be irreducible non permutation  matrices.
The one-sided topological Markov shifts 
$(X_A, \sigma_A)$ and
$(X_B, \sigma_B)$ 
are said to be {\it elementary coded equivalent}\/ 
if there exist a right Markov code $\C_1$ for $(X_A,\sigma_A)$
 and a right Markov code $\C_2$ for $(X_B,\sigma_B)$
such that  
the one-sided topological Markov shifts 
$(X_{A(\C_1)},\sigma_{A(\C_1)})$
and 
$(X_{B(\C_2)},\sigma_{B(\C_2)})$
are topologically conjugate.
It is written 
$
(X_A, \sigma_A) \underset{\cod}{\approx}
(X_B, \sigma_B)$.
If there exists a finite sequence 
$A_0, A_1, A_2,\dots A_m$ of 
irreducible non permutation matrices
such that 
$
A =A_0, A_m =B$
and
$$
(X_{A_0}, \sigma_{A_0}) \underset{\cod}{\approx}
(X_{A_1}, \sigma_{A_1}) \underset{\cod}{\approx}\cdots
                        \underset{\cod}{\approx}(X_{A_{m-1}}, \sigma_{A_{m-1}})
 \underset{\cod}{\approx}(X_{A_m}, \sigma_{A_m}),
 $$
then 
$(X_A, \sigma_A)$ and
$(X_B, \sigma_B)$ 
are said to be {\it coded equivalent}.\/
It is written 
$
(X_A, \sigma_A) \underset{\cod}{\sim}
(X_B, \sigma_B)$.
\end{definition}
\begin{lemma}
\begin{enumerate}
\renewcommand{\theenumi}{\roman{enumi}}
\renewcommand{\labelenumi}{\textup{(\theenumi)}}
\item
Let $A, B$ be irreducible non permutation matrices.
If the one-sided topological Markov shifts 
$(X_A, \sigma_A)$ and
$(X_B, \sigma_B)$ are topologically conjugate,
then they are coded equivalent.
\item
Let 
$\C \subset B_*(X_A)$
be a right Markov code for $(X_A,\sigma_A)$.
Then 
$(X_A,\sigma_A)$ and 
$(X_{A(\C)}, \sigma_{A(\C)})$
are coded equivalent.
\end{enumerate}
\end{lemma}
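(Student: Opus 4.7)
The plan is to prove both parts by exhibiting explicit right Markov codes whose associated Markov shifts already coincide with the ones we want to compare, so that the required topological conjugacy is either trivial or supplied by hypothesis.

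For part (i), the key observation is that the trivial right Markov code $\C_0 = \Sigma_A$ for $(X_A,\sigma_A)$, introduced in Example 0, satisfies $A(\C_0)=A$, hence $(X_{A(\C_0)},\sigma_{A(\C_0)}) = (X_A,\sigma_A)$, and similarly $\C_0' = \Sigma_B$ is a trivial right Markov code for $(X_B,\sigma_B)$ with $(X_{B(\C_0')},\sigma_{B(\C_0')}) = (X_B,\sigma_B)$. Thus, if a topological conjugacy $h:X_A\longrightarrow X_B$ is given, the very same $h$ serves as a topological conjugacy between $(X_{A(\C_0)},\sigma_{A(\C_0)})$ and $(X_{B(\C_0')},\sigma_{B(\C_0')})$, so $(X_A,\sigma_A)\underset{\cod}{\approx}(X_B,\sigma_B)$ and in particular the two shifts are coded equivalent.

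For part (ii), I take the right Markov code $\C_1 = \C$ itself for $(X_A,\sigma_A)$, which by construction gives $(X_{A(\C_1)},\sigma_{A(\C_1)}) = (X_{A(\C)},\sigma_{A(\C)})$. On the other side, by the previous lemma the matrix $A(\C)$ is irreducible and not a permutation, so $(X_{A(\C)},\sigma_{A(\C)})$ is itself an admissible one-sided topological Markov shift and its trivial right Markov code $\C_2 = \Sigma_{A(\C)}$ yields $(X_{A(\C)(\C_2)},\sigma_{A(\C)(\C_2)}) = (X_{A(\C)},\sigma_{A(\C)})$. Since these two resulting topological Markov shifts coincide, the identity map is a topological conjugacy between them, witnessing the elementary coded equivalence $(X_A,\sigma_A)\underset{\cod}{\approx}(X_{A(\C)},\sigma_{A(\C)})$.

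There is no genuine obstacle here: both statements reduce to an unpacking of the definitions combined with the existence of the trivial right Markov code and the already-established fact that $A(\C)$ is irreducible and not a permutation. The only point worth stating carefully is that elementary coded equivalence, by definition, immediately entails coded equivalence (take $m=1$ in the definition), so no chain of intermediate matrices is needed in either case.
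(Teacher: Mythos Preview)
Your proof is correct and, for part~(i), identical to the paper's: both use the trivial right Markov codes $\Sigma_A$ and $\Sigma_B$ so that the hypothesised conjugacy itself witnesses elementary coded equivalence.

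For part~(ii) there is a small but noteworthy difference. The paper's proof says to take ``trivial right Markov codes for each of $(X_A,\sigma_A)$ and $(X_{A(\C)},\sigma_{A(\C)})$'', which read literally would require $(X_A,\sigma_A)$ and $(X_{A(\C)},\sigma_{A(\C)})$ to be topologically conjugate---something not assumed and generally false. Your choice of $\C_1=\C$ on the $X_A$ side and the trivial code $\C_2=\Sigma_{A(\C)}$ on the other side is the argument that actually works, since then both coded shifts equal $(X_{A(\C)},\sigma_{A(\C)})$ and the identity is the required conjugacy. This is almost certainly what the paper intended, and your write-up makes the point precise where the paper's one-line proof is at best elliptical.
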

\begin{proof}
(i) The assertion is clear by considering trivial right Markov codes for each of 
$(X_A, \sigma_A)$ and
$(X_B, \sigma_B)$.

(ii)
 The assertion is also clear by considering trivial right Markov codes for each of 
$(X_A, \sigma_A)$ and
$(X_{A(\C)}, \sigma_{A(\C)})$ .
\end{proof}

We will next introduce a notion of moving block code 
between one-sided topological Markov shifts.
It is a generalization of sliding block codes between 
one-sided topological Markov shifts
and
gives rise to a coded equivalence.

 Let $\C =\{\omega(1), \omega(2), \dots,\omega(M)\} \subset B_*(X_A)$
be a right Markov code for $(X_A,\sigma_A)$.
 Let us define $\kappa_\C: \C\longrightarrow \Sigma_{A(\C)}$
 by $\kappa_\C(\omega(i)) = i$ for $i\in \Sigma_{A(\C)}$,
and a homeomorphism
$h_\C: X_A\longrightarrow X_{A(\C)}$ by setting 
$$
h_\C(\omega(i_1)\omega(i_2)\cdots\omega(i_n)\cdots ) = (i_1, i_2, \dots, i_n, \dots )
\quad
\text{ for }\quad
\omega(i_1)\omega(i_2)\cdots\omega(i_n)\cdots  \in X_A.
$$
We call the homeomorphism
$h_\C: X_A\longrightarrow X_{A(\C)}$
the standard coding homeomorphism and write $h_\C =\kappa_\C^\infty$. 
 
 Let us denote by $B_n(\C)$ the set of concatenated $n$ words of the code $\C$,
 that is 
 $$
 B_n(\C) = \{ \omega(i_1) \cdots \omega(i_n)  \in B_*(X_A) 
 \mid i_1,\dots, i_n \in \Sigma_{A(\C)} \}. 
$$
 \begin{definition}
Let $\C_1 =\{\omega(1), \omega(2), \dots,\omega(M_1)\} \subset B_*(X_A)$
be a right Markov code for $(X_A,\sigma_A)$ and
$\C_2 =\{\xi(1), \xi(2), \dots,\xi(M_2)\} \subset B_*(X_B)$
be a right Markov code for $(X_B,\sigma_B)$.
Let
$\Phi: B_{n+1}(X_{A(\C_1)}) \longrightarrow \Sigma_{B(\C_2)}$
be a block map in the ordinary sense (cf. \cite{LM}).
Then a {\it coded block map}\/ 
$\Phi_\C: B_{n+1}(\C_1) \longrightarrow B_1(\C_2)(= \C_2)$
is defined by
\begin{equation*}
\Phi_C(\omega(i_1)\omega(i_2) \dots \omega(i_{n+1}) )
= \kappa_{\C_2}^{-1}\circ \Phi(i_1,i_2,\dots,i_{n+1}), \qquad
i_1, i_2,\dots,i_{n+1}\in \Sigma_{A(\C_1)}. 
\end{equation*} 
For the block map
$\Phi: B_{n+1}(X_{A(\C_1)}) \longrightarrow \Sigma_{B(\C_2)}$,
let us denote by 
$
\phi :=\Phi_\infty^{[0,n]}: X_{A(\C_1)} \longrightarrow X_{B(\C_2)}
$
the sliding block code with memory $0$ and anticipation $n$ (see \cite[p. 15]{LM}).
The {\it moving block code }\/ 
$$\varphi_{\Phi}: X_A \longrightarrow X_B$$
is a map defined by 
$$
\varphi_\Phi:= h_{\C_2}^{-1} \circ \phi \circ h_{\C_1}.
$$
Hence the diagram
\begin{equation*}
\begin{CD}
X_A @>\varphi_{\Phi}>> X_B \\
@V{h_{\C_1}}VV @VV{h_{\C_2}}V \\
X_{A(\C_1)} @>\phi>> X_{B(\C_2)}
\end{CD}
\end{equation*}
commutes.
\end{definition}
If both right Markov codes $\C_1, \C_2$ are trivial right Markov codes,
then the coded block map and the moving block code 
are block map and sliding block code 
in the ordinary sense.
\begin{lemma}
Let  
$(X_A, \sigma_A)$ and
$(X_B, \sigma_B)$ 
be one-sided topological Markov shifts.
Then they are elementary coded equivalent if and only if 
there exists a homeomorphism
$\varphi_\Phi:X_A\longrightarrow X_B$ 
of moving block code for some block map
$\Phi:B_{n+1}(X_{A(\C_1)}) \longrightarrow \Sigma_{B(\C_2)}$
with right Markov codes
$\C_1$ for 
$(X_A, \sigma_A)$ and
$\C_2$ for
$(X_B, \sigma_B)$,
respectively.
\end{lemma}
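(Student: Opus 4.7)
The plan is to exploit the commutative square defining $\varphi_\Phi$ so that the equivalence reduces to a statement about the sliding block code $\phi$ between $X_{A(\C_1)}$ and $X_{B(\C_2)}$, and then apply the one-sided Curtis--Hedlund--Lyndon theorem to bridge topological conjugacy with sliding block codes.

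First I would handle the ``if'' direction, which is the routine one. Given a moving-block-code homeomorphism $\varphi_\Phi : X_A \to X_B$ coming from right Markov codes $\C_1, \C_2$ and a block map $\Phi$, the defining identity $\varphi_\Phi = h_{\C_2}^{-1} \circ \phi \circ h_{\C_1}$ rearranges to
$$
\phi = h_{\C_2} \circ \varphi_\Phi \circ h_{\C_1}^{-1}.
$$
Since $h_{\C_1}$ and $h_{\C_2}$ are homeomorphisms by construction and $\varphi_\Phi$ is a homeomorphism by hypothesis, $\phi$ is a homeomorphism from $X_{A(\C_1)}$ onto $X_{B(\C_2)}$. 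Because $\phi = \Phi_\infty^{[0,n]}$ is a sliding block code, $\phi\circ\sigma_{A(\C_1)} = \sigma_{B(\C_2)}\circ\phi$ holds automatically from the block-map definition. Hence $\phi$ is a topological conjugacy, and the definition of elementary coded equivalence yields $(X_A,\sigma_A) \underset{\cod}{\approx} (X_B,\sigma_B)$.

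For the ``only if'' direction, suppose $(X_A,\sigma_A)$ and $(X_B,\sigma_B)$ are elementary coded equivalent via right Markov codes $\C_1, \C_2$ and a topological conjugacy $\psi : X_{A(\C_1)} \to X_{B(\C_2)}$. The crucial step is to show that $\psi$ is necessarily a sliding block code with memory $0$. For this I would consider the first-coordinate map $x \mapsto \psi(x)_1 \in \Sigma_{B(\C_2)}$, which is continuous with finite range on the compact space $X_{A(\C_1)}$. By uniform continuity with respect to the cylinder neighborhood basis, there exists $n \ge 0$ such that $\psi(x)_1$ depends only on $(x_1, \dots, x_{n+1})$, which defines a block map $\Phi : B_{n+1}(X_{A(\C_1)}) \to \Sigma_{B(\C_2)}$. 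Shift-equivariance of $\psi$ then forces $\psi(x)_k = \Phi(x_k, \dots, x_{k+n})$ for all $k \in \N$, i.e. $\psi = \Phi_\infty^{[0,n]}$. Setting $\varphi_\Phi := h_{\C_2}^{-1} \circ \psi \circ h_{\C_1}$ yields the desired moving-block-code homeomorphism, since each of $h_{\C_1}$, $h_{\C_2}$, $\psi$ is a homeomorphism.

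The main obstacle, though standard, is the one-sided version of the Curtis--Hedlund--Lyndon theorem used in the ``only if'' direction: the restriction to memory $0$ is forced because one-sided shift spaces carry no ``past'' coordinates, and the argument relies on compactness of $X_{A(\C_1)}$ together with the cylinder basis for its topology. This is well known (cf. \cite{LM}), so the proof is essentially a diagram chase modulo this classical fact.
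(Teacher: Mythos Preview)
Your proof is correct and follows essentially the same route as the paper: both reduce the question to whether the sliding block code $\phi$ on the coded shifts is a topological conjugacy, using that $h_{\C_1}, h_{\C_2}$ are homeomorphisms and that sliding block codes are automatically shift-commuting. The only difference is expository: the paper compresses everything into a single equivalence chain and leaves the Curtis--Hedlund--Lyndon step implicit in the ``only if'' direction, whereas you spell out explicitly why an arbitrary conjugacy $\psi:X_{A(\C_1)}\to X_{B(\C_2)}$ must have the form $\Phi_\infty^{[0,n]}$.
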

\begin{proof}
Since
$\varphi_\Phi:X_A\longrightarrow X_B$ 
is a homeomorphism if and only if 
$
\phi: X_{A(\C_1)} \longrightarrow 
X_{B(\C_2)}
$
is a homeomorphism.
Since 
$
\phi: X_{A(\C_1)} \longrightarrow 
X_{B(\C_2)}
$
is always shift-commuting, we know that  
$\varphi_\Phi:X_A\longrightarrow X_B$ 
is a homeomorphism of moving block code
if and only if 
$
\phi: X_{A(\C_1)} \longrightarrow 
X_{B(\C_2)}
$
is a topological conjugacy.
The latter is equivalent to the condition that 
$(X_A, \sigma_A)$ and
$(X_B, \sigma_B)$ 
 are elementary coded equivalent.
Hence we have the assertion.
\end{proof}
In \cite{MaPacific}, a notion of continuous orbit equivalence of one-sided topological Markov shifts 
was defined in the following way.
Two one-sided topological Markov shifts 
$(X_A, \sigma_A)$ and $(X_B, \sigma_B)$ are said to be continuously orbit equivalent 
if there exist continuous maps
 $k_1,\, l_1:X_A \rightarrow \Zp$
such that
$\sigma_B^{k_1(x)} (h(\sigma_A(x))) = \sigma_B^{l_1(x)}(h(x))$
for $x \in X_A$,
and similarly 
there exist continuous maps
$k_2,\, l_2:X_B \rightarrow \Zp$
such that
$\sigma_A^{k_2(y)} (h^{-1}(\sigma_B(y))) = \sigma_A^{l_2(y)}(h^{-1}(y))$
for $y \in X_B$.
The following proposition is a key  to show Theorem \ref{thm:main}.
\begin{proposition}\label{prop:codedcoe}
Let $A$ be an irreducible, non permutation  matrix with entries in $\{0,1\}$.
Let $\C$ be a right Markov code for $(X_A, \sigma_A)$.
 Then
the one-sided topological Markov shift
$(X_{A(\C)}, \sigma_{A(\C)})$ 
is continuously orbit equivalent to
$(X_A, \sigma_A)$.
\end{proposition}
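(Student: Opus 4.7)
The plan is to exhibit the standard coding homeomorphism $h_\C: X_A \to X_{A(\C)}$ itself as a continuous orbit equivalence between $(X_A,\sigma_A)$ and $(X_{A(\C)},\sigma_{A(\C)})$.

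For the forward cocycle, I would let $L$ be the integer from the shift invariance property (ii) of Definition~\ref{def:Markovcode}. For $x\in X_A$, Lemma~\ref{lem:factorization} provides a unique factorization $x = \omega(i_1)\omega(i_2)\cdots$, and shift invariance produces $j_1,\ldots,j_k \in \Sigma_{A(\C)}$ (with $k$ depending on $(i_1,\ldots,i_L)$) such that $\sigma_A(\omega(i_1))\omega(i_2)\cdots\omega(i_L) = \omega(j_1)\cdots\omega(j_k)$. Concatenating with $\omega(i_{L+1})\omega(i_{L+2})\cdots$ yields a factorization of $\sigma_A(x)$, which by uniqueness must coincide with the one produced by $h_\C$. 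Hence $h_\C(\sigma_A(x)) = (j_1,\ldots,j_k, i_{L+1}, i_{L+2},\ldots)$, and setting $l_1(x) := L$ and $k_1(x) := k$ I obtain
\[
\sigma_{A(\C)}^{k_1(x)}(h_\C(\sigma_A(x))) = (i_{L+1}, i_{L+2}, \ldots) = \sigma_{A(\C)}^{l_1(x)}(h_\C(x)).
\]

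For the backward cocycle I would take $k_2(y) := 0$ and $l_2(y) := \ell(i_1)$ where $y = (i_1,i_2,\ldots) \in X_{A(\C)}$. Directly from the definition of $h_\C^{-1}$,
\[
\sigma_A^{\ell(i_1)}(h_\C^{-1}(y)) = \sigma_A^{\ell(i_1)}(\omega(i_1)\omega(i_2)\cdots) = \omega(i_2)\omega(i_3)\cdots = h_\C^{-1}(\sigma_{A(\C)}(y)),
\]
which is the desired identity.

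The main (and really only) technical point is continuity of the four cocycle functions. Both $l_1$ (a constant) and $k_2$ (identically zero) are trivial, while $l_2(y) = \ell(i_1)$ depends only on the first coordinate of $y$. For $k_1$, the value depends only on the tuple $(i_1,\ldots,i_L)$; with $K_0 := \max_i \ell(i)$, any two points of $X_A$ agreeing on their first $L\cdot K_0$ coordinates share the same first $L$ code-word blocks, because the prefix property lets us read off the factorization symbol-by-symbol as soon as enough coordinates are available. Hence $k_1$ is locally constant and therefore continuous, completing the verification that $h_\C$ is a continuous orbit equivalence.
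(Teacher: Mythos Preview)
Your proposal is correct and follows essentially the same argument as the paper: both use the standard coding homeomorphism $h_\C$, set $l_1\equiv L$ and $k_1(x)=k$ from the shift invariance property for the forward direction, and $k_2\equiv 0$, $l_2(y)=\ell(i_1)$ for the backward direction. Your explicit justification of the continuity of $k_1$ (via the observation that the first $L$ code-blocks are determined by the first $L\cdot K_0$ coordinates) is a welcome addition that the paper leaves implicit in its use of the cylinder-set partition $X_A=\bigsqcup U_{\omega(i_1)\cdots\omega(i_L)}$.
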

\begin{proof}
Put
$B = A(\C)$.
By Lemma \ref{lem:factorization} with the unique factorization property of $\C$,
 for any $x =(x_n)_{n\in \N} \in X_A$
there exists a unique increasing sequence
$1<k_1 <k_2  <\cdots  $ of positive integers
and
$i_1, i_2, \cdots \in \Sigma_{B}$ such that 
\begin{equation*}
x_{[1,k_1)} =\omega(i_1), \qquad
x_{[k_1,k_2)} =\omega(i_2),\quad \dots \quad
x_{[k_n,k_{n+1})}=\omega(i_{n+1}), \quad \dots
\end{equation*}
That is,
$x$ has a unique factorization:
\begin{equation*}
x = \omega(i_1)\omega(i_2)\cdots \omega(i_n)\cdots.
\end{equation*}
Let 
$h: X_A \longrightarrow X_{B}$
be the standard coding homeomorphism $h_\C$
defined by 
\begin{equation*}
h(x) = (i_1, i_2, i_3,\dots ) 
\quad \text{ for } \quad x = \omega(i_1)\omega(i_2)\cdots \omega(i_n)\cdots.
\end{equation*}
Since
$$
A(\C)(i_n, i_{n+1}) = A(r(\omega({i_n})), s(\omega({i_{n+1}}))) 
=A(x_{k_n-1}, x_{k_n})=1,
$$
we have $h(x) \in X_{B}.$
It is easy to see that 
$h: X_A\longrightarrow X_{B}$  is continuous.
As $\C$ has unique factorization property, $h$ is bijective,
so that it is a homeomorphism.
Let $U_{\omega(i)}$ be the cylinder set of $X_A$ beginning with the word $\omega(i)$.
As $\C$ is a prefix code with unique factorization property,
we have a disjoint union  
$X_A = \sqcup_{i\in \Sigma_B} U_{\omega(i)}$ 
so that we have a disjoint union
\begin{equation*}
X_A = \bigsqcup_{(i_1,i_2,\dots,i_L) \in B_L(X_{B})} 
U_{\omega(i_1)\omega(i_2)\cdots\omega(i_L)}.
\end{equation*} 
For any $x \in U_{\omega(i_1)\omega(i_2)\cdots\omega(i_L)} \subset X_A$,
we set
$l_1(x) = L, k_1(x) = k(i)$,
where $k(i)$ is the number $k$ determined by \eqref{eq:shiftinvariance}.
As
$\omega(i_1)\omega(i_2)\cdots \omega(i_L) = x_{[1,k_L)},
$
its length is
$ k_L-1$.
We then have
$h(x) = ( i_1,i_2,\dots,i_L)h(x_{[k_L,\infty)})$
so that
\begin{equation*}
\sigma_B^{l_1(x)}(h(x)) = \sigma_B^{L}(h(x)) = h(x_{[k_L,\infty)}).
\end{equation*}
On the other hand,
by \eqref{eq:shiftinvariance}, 
\begin{equation*}
\sigma_A(x) = \sigma_A(\omega(i_1) )\omega(i_2) \cdots \omega(i_L)x_{[k_L,\infty)}
= \omega(j_1) \omega(j_2) \cdots \omega(j_k)x_{[k_L,\infty)}
\end{equation*}
so that 
\begin{equation*}
h(\sigma_A(x)) = (j_1,j_2, \cdots j_k) h(x_{[k_L,\infty)}).
\end{equation*}
We then have 
\begin{equation*}
\sigma_B^{k_1(x)}(h(\sigma_A(x)) 
=\sigma_B^{k}(h(\sigma_A(x))) 
= h(x_{[k_L,\infty)})
\end{equation*}
and hence 
\begin{equation*}
 \sigma_B^{k_1(x)}(h(\sigma_A(x)) =\sigma_B^{l_1(x)}(h(x)).  
\end{equation*}
We will next study 
the inverse $h^{-1}:X_{B} \longrightarrow X_A$.
For $y =(i_1,i_2,i_3,\dots ) \in X_{B}$ with
 $i_1, i_2,i_3, \dots \in \Sigma_B$
consider 
$$
h^{-1}(y) = \omega(i_1)\omega(i_2) \omega(i_3) \cdots  
=(x_1, \dots, x_{k_1-1}, x_{k_1},\dots, x_{k_2-1},x_{k_2},\dots ).
$$
Put
$$
l_2(y) = k_n-1(=\ell(i_n))\quad \text{  for }\quad y \in U_{i_n} \subset X_{B}
$$
Since $X_{B} = \sqcup_{i_j\in \Sigma_B} U_{i_j}$,
the map
$l_2:X_{B}\longrightarrow \Zp$ is continuous map. 
We also put
$k_2(y) =0$ for all $y \in X_{B}$.
As
$\sigma_B(y) = (i_2,i_3,\dots ),
$ 
we have
$$
h^{-1}(\sigma_B(y)) = \omega(i_2) \omega(i_3) \cdots  
=( x_{k_1},\dots, x_{k_2-1},x_{k_2},\dots )
$$
so that 
\begin{align*}
\sigma_A^{l_2(y)}(h^{-1}(y))
& = \sigma_A^{k_1-1}( x_1, \dots, x_{k_1-1},x_{k_1},\dots, x_{k_2-1},x_{k_2},\dots )\\
& = ( x_{k_1},\dots, x_{k_2-1},x_{k_2},\dots )\\
\end{align*}
and hence
we have
\begin{equation*}
\sigma_A^{k_2(y)}(h^{-1}(\sigma_B(y))) =h^{-1}(\sigma_B(y)) =\sigma_A^{l_2(y)}(h^{-1}(y)).
\end{equation*}
We thus  see that 
$(X_A, \sigma_A)$ and
$(X_B, \sigma_B)$ 
are continuously orbit equivalent. 
\end{proof}

The following is a main result of the paper.
\begin{theorem}\label{thm:main}
Let $A$ and $B$ be irreducible non permutation  matrices.
If the one-sided topological Markov shifts 
$(X_A, \sigma_A)$ and
$(X_B, \sigma_B)$ are coded equivalent,
then they are continuously orbit equivalent. 
\end{theorem}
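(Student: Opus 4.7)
The plan is to reduce the statement to Proposition \ref{prop:codedcoe} together with the fact that continuous orbit equivalence is an equivalence relation on the class of one-sided topological Markov shifts. So the first preliminary step is to recall (or, if not already cited, briefly verify) three facts about the relation of continuous orbit equivalence: (a) a topological conjugacy $h:X_A\to X_B$ is a continuous orbit equivalence, obtained by setting $k_1\equiv 0, l_1\equiv 1$ and $k_2\equiv 0, l_2\equiv 1$; (b) continuous orbit equivalence is symmetric, which is built into the definition; (c) continuous orbit equivalence is transitive, which follows by composing the homeomorphisms and summing/compositing the cocycle-like functions $k_i,l_i$ in a straightforward continuous fashion. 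These are standard from \cite{MaPacific}.

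With these in hand, I would first handle the elementary case. Assume $(X_A,\sigma_A)\underset{\cod}{\approx}(X_B,\sigma_B)$ via right Markov codes $\C_1$ for $(X_A,\sigma_A)$ and $\C_2$ for $(X_B,\sigma_B)$ and a topological conjugacy $\phi:X_{A(\C_1)}\to X_{B(\C_2)}$. By Proposition \ref{prop:codedcoe}, the standard coding homeomorphism $h_{\C_1}:X_A\to X_{A(\C_1)}$ is a continuous orbit equivalence, and likewise $h_{\C_2}:X_B\to X_{B(\C_2)}$ is a continuous orbit equivalence. By fact (a), $\phi$ itself is a continuous orbit equivalence. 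By (b) we may invert $h_{\C_2}$, and by (c) the composition
\begin{equation*}
h_{\C_2}^{-1}\circ\phi\circ h_{\C_1}: X_A\longrightarrow X_B
\end{equation*}
is a continuous orbit equivalence. Thus elementary coded equivalence implies continuous orbit equivalence.

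For the general case, $(X_A,\sigma_A)\underset{\cod}{\sim}(X_B,\sigma_B)$ means there is a finite chain
\begin{equation*}
(X_{A_0},\sigma_{A_0})\underset{\cod}{\approx}(X_{A_1},\sigma_{A_1})\underset{\cod}{\approx}\cdots \underset{\cod}{\approx}(X_{A_m},\sigma_{A_m})
\end{equation*}
with $A_0=A$ and $A_m=B$. Proceed by induction on $m$. The case $m=1$ is the elementary case just handled. For the inductive step, assume $(X_{A_0},\sigma_{A_0})$ is continuously orbit equivalent to $(X_{A_{m-1}},\sigma_{A_{m-1}})$; combining with the elementary case for the final link and applying transitivity yields continuous orbit equivalence of $(X_A,\sigma_A)$ and $(X_B,\sigma_B)$.

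The only real content lies in Proposition \ref{prop:codedcoe}; everything else is bookkeeping with the equivalence relation, so there is no serious obstacle. The one point worth being careful about is the explicit construction of the cocycles for the composition in the transitivity step, but since the cocycles for $h_{\C_1}$, $\phi$ (which are $k\equiv 0, l\equiv 1$) and $h_{\C_2}^{-1}$ are all continuous $\Zp$-valued maps, their combinations via the standard composition formula for continuous orbit equivalences remain continuous and $\Zp$-valued, which suffices.
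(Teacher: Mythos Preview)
Your proposal is correct and follows essentially the same approach as the paper: reduce to the elementary case, invoke Proposition~\ref{prop:codedcoe} for each side, use that topological conjugacy implies continuous orbit equivalence, and conclude by the fact (from \cite{MaPacific}) that continuous orbit equivalence is an equivalence relation. The paper's own proof is slightly terser---it cites the equivalence-relation property once and does not spell out the induction or the explicit composition---but the argument is the same.
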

\begin{proof}
We first note that 
continuous orbit equivalence in one-sided topological Markov shifts 
is an equivalence relation (\cite[Theorem 1.1]{MaPacific}).
We may assume that 
$
(X_A, \sigma_A) \underset{\cod}{\approx}
(X_B, \sigma_B)$.
Take a right Markov code $\C_1$(resp. $\C_2$) for $(X_A, \sigma_A)$
(resp. $(X_B, \sigma_B))$ such that  
$(X_{A(\C_1)}, \sigma_{A(\C_1)})$
is topologically conjugate to
$(X_{B(\C_2)}, \sigma_{B(\C_2)})$.
Since
topological conjugacy implies continuous orbit equivalence,
we conclude that 
$(X_A, \sigma_A)$ and
$(X_B, \sigma_B)$ are continuously orbit equivalent,
because of Proposition \ref{prop:codedcoe}. 
\end{proof}

\begin{example}
{\bf 1.}
Let
$A_1=
\begin{bmatrix}
1& 1 \\
1&0
\end{bmatrix}.
$
Define
$\C_1 = \{1, 21\}$.
Then $\C_1$ is a right Markov code.
Put $\omega(1) =1, \omega(2) = 21$.
Since
$$
s(\omega(1)) = r(\omega(1)) =r(\omega(2)) =1,  
\qquad s(\omega(2)) =2,
$$
we have
$
A(\C_1)(i,j) = A(i,j) =1
$
for all $i,j=1,2$
so that
\begin{equation}\label{eq:AC1}
A(\C_1) =
\begin{bmatrix}
1 & 1 \\
1 & 1 
\end{bmatrix}
=[2]
\end{equation}
Therefore 
$
(X_{A_1}, \sigma_{A_1}) \underset{\cod}{\sim}
(X_{[2]}, \sigma_{[2]})$
and hence
$(X_{A_1}, \sigma_{A_1})$ is continuously orbit equivalent to 
the full $2$-shift.
This fact is already seen in \cite{MaPacific}.

{\bf 2.}
Let
$A_2=
\begin{bmatrix}
0 & 1& 0 \\
0 & 0& 1 \\
1 & 1& 0
\end{bmatrix}.
$
Define
$\C_2 = \{12, 23, 323, 312\}$.
Then $\C_2$ is a right Markov code.
Put $\omega(1) =12, \omega(2) = 23, \omega(3) = 323, \omega(4) = 312$.
Since
$$
s(\omega(1)) =1, \quad r(\omega(1)) = r(\omega(4))= s(\omega(2)) =2, \quad
r(\omega(2)) = r(\omega(3))= s(\omega(3))= s(\omega(4)) =3, 
$$
we have
$$
A(\C_2) =
\begin{bmatrix}
0 & 0 & 1 & 1 \\
1 & 1 & 0 & 0 \\
1 & 1 & 0 & 0 \\
0 & 0 & 1 & 1 
\end{bmatrix}.
$$
As the total column amalgamation of the matrix $A(\C_2)$ is the matrix 
$A(\C_1)$ in \eqref{eq:AC1}.
Hence the one-sided topological Markov shift
$(X_{A(\C_2)},\sigma_{A(\C_2)})$
is topologically conjugate to
the full 2-shift
$(X_{[2]},\sigma_{[2]})$.
Theherefore $(X_{A_2},\sigma_{A_2}) 
 \underset{\cod}{\sim}
(X_{[2]}, \sigma_{[2]})$.

{\bf 3.}
Let
$A_3=
\begin{bmatrix}
1 & 1& 1 \\
1 & 0& 1 \\
1 & 0& 0
\end{bmatrix}.
$
Define
$\C_3 = \{1,21, 31, 231\}$.
Then $\C_3$ is a right Markov code.
Put $\omega(1) =1, \omega(2) = 21, \omega(3) = 31, \omega(4) = 231$.
Since
$$
s(\omega(1)) =r(\omega(1)) = r(\omega(2))= r(\omega(3)) =r(\omega(4)) =1, \quad
s(\omega(2)) = s(\omega(4)) =2, \quad
s(\omega(3)) =3,
$$
we have
$$
A(\C_3) =
\begin{bmatrix}
1 & 1 & 1 & 1 \\
1 & 1 & 1 & 1 \\
1 & 1 & 1 & 1 \\
1 & 1 & 1 & 1 
\end{bmatrix}
=[4].
$$
Hence the one-sided topological Markov shift
$(X_{A(\C_3)},\sigma_{A(\C_3)})$
is the full 4-shift $(X_{[4]}, \sigma_{[4]}).$
Theherefore $(X_{A_3},\sigma_{A_3}) 
 \underset{\cod}{\sim}
(X_{[4]}, \sigma_{[4]})$.
  
Related results to classification of Cuntz--Krieger algebras are seen in
\cite{EFW}, \cite{MaYMJ}.

\end{example}

\medskip

{\it Acknowledgments:}
The author would like to thank the referee for his/her careful reading the first draft of the paper.
This work was supported by 
JSPS KAKENHI Grant Numbers 15K04896, 19K03537.


\end{document}